 % The set of distances in a Polish metric space

\documentclass{amsart}

\title{The set of distances in a Polish metric space}
\author{John D. Clemens}
\address{Boise State University, 1910 University Dr., Boise, ID 83725}
\email{johnclemens@boisestate,edu}
\date{}
\subjclass{Primary 03E15, 54H05}
\keywords{Polish metric space, distance set}
\thanks{The main result appeared in the author's Ph.D. thesis \cite{diss}, although a simplified proof is given here.}

\usepackage{amsmath}
\usepackage{amssymb}
\usepackage{latexsym}
\usepackage{amsopn}
\usepackage{amsthm}

\def\bbN{{\mathbb N}}
\def\bbR{{\mathbb R}}

\def\bbZ{{\mathbb Z}}

\DeclareRobustCommand{\qed}{%
  \ifmmode 
  \else \leavevmode\unskip\penalty9999 \hbox{}\nobreak\hfill
  \fi
  \quad\hbox{\qedsymbol}}

\renewcommand{\qedsymbol}{\mbox{$\Box$}}

\renewenvironment{proof}[1][\proofname]{\par
  \normalfont
\trivlist 
  \item[\hskip\labelsep%\hskip\parindent
  \bfseries %\scshape
    #1:]\ignorespaces
}{\qed \endtrivlist }

\newtheorem{thm}{Theorem}
\newtheorem*{thm*}{Theorem}
\newtheorem{cor}[thm]{Corollary}
\newtheorem{prop}[thm]{Proposition}
\newtheorem{lem}[thm]{Lemma}

\theoremstyle{definition}
\newtheorem*{dfn*}{Definition}
\newtheorem*{question*}{Question}

\DeclareMathOperator{\Dist}{Dist}

\def\bSi{{\boldsymbol{\Sigma}}}

\def\c{\mathcal{C}}
\def\n{\mathcal{N}}

\begin{document}

\maketitle

\begin{abstract}
We show that a set of non-negative reals is the distance set of a
separable complete metric space if and only if it is either countable or is 
an analytic set which has 0 as a limit point. We also consider spaces
with simpler distance sets.
\end{abstract}

In this article we consider the possible sets of distances in Polish metric 
spaces. 
A {\it Polish metric space} is a pair $(X,d)$, where $X$ is a Polish
space (a separable, completely-metrizable space) and $d$ is a complete, 
compatible metric for $X$.  Given a Polish metric space $(X,d)$, we can 
consider its set of distances, $\Dist(X,d)$:

\begin{dfn*} Let$(X,d)$ be a Polish metric space. The {\it distance set} of $(X,d)$, $\Dist(X,d)$, is:
\[ \Dist(X,d) =  \{ d(x,y) : x,y \in X \} . \]
\end{dfn*}
Our main result will be to characterize which sets of reals can be the distance
set of some Polish metric space. We also characterize the spaces with
some simpler distance sets. Our interest in distance sets
is partially motivated by the problem of classifying metric spaces up to
isometry, since distance sets form an isometry invariant, although
generally not a complete invariant. An overview of the isometry problem
may be found in \cite{cgk}.

In the first section we present some basic facts about distance sets, and in 
the next section we present some preliminaries about spaces with 
countable distance sets.
In Section~\ref{sec:main} we present the main result that the
distance sets of Polish metric spaces are precisely the analytic sets which
are either countable or have 0 as a limit point. Following that we draw
some corollaries about spaces with compact and $K_{\sigma}$ distance sets,
and in the final section we discuss larger point configurations.

\section{The set of distances}

Distance sets have been studied in several contexts. Much of the work has
been on the distance sets of subsets of the spaces $\bbZ$ 
or $\bbR^n$ with the usual metrics.  One of the earliest
results is Steinhaus's theorem (in \cite{steinhaus}) that the
distance set of a subset of $\bbR$ of positive measure contains a
(right-) neighborhood of 0. Sierpi{\'{n}}ski also showed in \cite{sierpinski}
that a distance set could be more complicated than the original set by
producing a $G_{\delta}$ subset of $\bbR$ whose distance set is
$\bSi^1_1$-complete.

In the case that the metric space in question is a subset of $\bbZ$,
the distance set is generally known as a {\em difference
set}. Schmerl in \cite{schmerl} has characterized how complicated the set
of difference sets is, showing that in a descriptive sense there can be no
simple characterization of when a subset of $\bbZ$ is a difference set:
\begin{thm*}[Schmerl] The set 
$\{A \subseteq \bbZ : \text{$A$ is a difference set} \}$ 
is $\bSi^1_1$-complete.
\end{thm*}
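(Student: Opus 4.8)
The plan is to establish the two halves of the completeness assertion separately. The upper bound is routine: a subset $A$ of $\bbZ$ is a difference set precisely when
\[
\exists\,S\in 2^{\bbZ}\ \ \forall n\in\bbZ\ \bigl(n\in A \leftrightarrow \exists\,s\in\bbZ\ \exists\,t\in\bbZ\,(s\in S \wedge t\in S \wedge s - t = n)\bigr),
\]
and since the matrix is Borel in the parameters $A$ and $S$, quantifying over the single real parameter $S$ exhibits $\{A\subseteq\bbZ : A\text{ is a difference set}\}$ as $\bSi^1_1$.

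For hardness I would give a Borel reduction --- and, with some additional care, a continuous one --- of the set of ill-founded trees on $\bbN$, a prototypical $\bSi^1_1$-complete set, to the collection of difference sets. The idea is to code the nodes of a tree $T$ by integers $n_\sigma$ ($\sigma\in\bbN^{<\omega}$, with $n_\emptyset=0$) drawn from a sufficiently lacunary scale: fix a super-increasing sequence $(m_k)$ of positive integers and, using a pairing function, set $n_\sigma=\sum_{i<|\sigma|}m_{\langle i,\sigma(i)\rangle}$, so that $\sigma\mapsto n_\sigma$ is injective and, whenever $\sigma$ is an initial segment of $\tau$, the difference $n_\tau-n_\sigma$ depends only on --- and recovers --- the string by which $\tau$ extends $\sigma$. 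One then defines $A_T\subseteq\bbZ$ from these data (a first approximation being the symmetric hull of $\{\,n_\tau-n_\sigma : \sigma\subseteq\tau,\ \tau\in T\,\}$, suitably supplemented) so that $T\mapsto A_T$ is continuous, and one aims for the equivalence
\[
A_T\text{ is a difference set}\iff T\text{ has an infinite branch}.
\]
In the ill-founded direction a branch $x\in[T]$ should furnish a witnessing set $S$ built from the tokens $n_{x|k}$; in the well-founded direction one argues that any $S$ with $S-S=A_T$ is infinite (since $A_T$ is), and that the lacunarity of the $m_k$ forces its elements, after translating so that $0\in S$, to be the tokens of an infinite $\subseteq$-chain of nodes of $T$, which cannot exist.

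The step I expect to be the real obstacle --- and where the naive construction just sketched has to be refined --- is securing the equality $S-S=A_T$ rather than a mere inclusion. A witness built from a single branch $x$ realizes only the differences $n_{x|j}-n_{x|i}$, which form a \emph{proper} subset of $A_T$ as soon as $T$ has nodes lying off $x$, whereas padding $S$ with further integers to capture the missing elements of $A_T$ tends to introduce new differences lying outside $A_T$. Reconciling this --- deciding what $A_T$ should contain beyond the bare tree differences, and how a witness should be padded, so that the presence of a branch makes the equation $S-S=A_T$ solvable while well-foundedness makes it unsolvable for \emph{every} $S$ (including candidates using incomparable tokens, translates, or unintended combinations of the padding) --- is the crux, and it is precisely here that the super-increasing growth rate must be used fully, to exclude all accidental coincidences. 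Once that bookkeeping is in place, verifying both directions of the displayed equivalence is routine.
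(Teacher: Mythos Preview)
The paper does not prove this theorem at all: it is quoted as a result of Schmerl and simply cited to his paper \cite{schmerl}. There is therefore no ``paper's own proof'' to compare your proposal against.

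As for your sketch on its own terms: the upper bound is correct and standard. For hardness, however, what you have written is an outline that locates the difficulty rather than a proof that overcomes it. You yourself identify the obstruction precisely: a witness $S$ built from a single branch $x$ gives $S-S\subsetneq A_T$ as soon as $T$ has nodes off $x$, while padding $S$ to pick up the missing differences threatens to create new ones outside $A_T$. You then say that ``once that bookkeeping is in place'' the rest is routine---but the bookkeeping \emph{is} the proof, and you have not supplied it. In particular, you have not specified what $A_T$ actually is beyond a ``first approximation,'' nor how the padding is to be done, nor why super-increasing growth rules out every unintended coincidence among differences of a putative $S$. Until those choices are made and verified, the reduction is a plausible strategy rather than an argument. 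If you want to pursue this, you should consult Schmerl's original paper for the construction he actually uses.
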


There are also several results characterizing which sets can be the set of
distances in some type of metric space. 
In the most general context, we have the following result (see
\cite{kellynordhaus}):

\begin{thm*}[Kelly and Nordhaus] Any set of non-negative reals containing 0
is the distance set of some metric space.
\end{thm*}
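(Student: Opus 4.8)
The plan is to give an explicit construction using the simplest possible point set, namely $S$ itself. Let $S$ be the given set of non-negative reals with $0 \in S$, and define $d \colon S \times S \to \bbR$ by $d(x,x) = 0$ and $d(x,y) = \max\{x,y\}$ for $x \neq y$. The first thing I would check is that the distances realized by $d$ are exactly the elements of $S$: for $x \neq y$ the value $d(x,y)$ is one of $x, y$, hence lies in $S$, and $d(x,x) = 0 \in S$ by hypothesis, so $\Dist(S,d) \subseteq S$; conversely, for any $s \in S$ we have $d(0,s) = s$ (here we use $0 \in S$ again, so that $0$ is an available point), so every element of $S$ is attained. Thus $\Dist(S,d) = S$, \emph{provided} $d$ is in fact a metric.

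The remaining step is to verify the metric axioms. Symmetry is immediate, and $d(x,y) = 0 \iff x = y$ holds because two distinct non-negative reals cannot both equal $0$, so $\max\{x,y\} > 0$ whenever $x \neq y$. The only axiom requiring an argument is the triangle inequality $d(x,z) \le d(x,y) + d(y,z)$, and even this is routine: if $x, y, z$ are distinct, assume without loss of generality $x \le z$, so that $d(x,z) = z \le \max\{y,z\} = d(y,z) \le d(x,y) + d(y,z)$; and if two of the three points coincide the inequality is trivial (either the left side is $0$, or one summand on the right is $0$ and the other equals $d(x,z)$).

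I do not expect a genuine obstacle here — the entire content of the theorem is in choosing the right formula for $d$. The key point is that $\max$, unlike the usual metric on $\bbR$, is ``absorptive'': combining two points never produces a distance outside the set of coordinates already in play, so the distance set cannot grow beyond $S$. The one degenerate case to note separately is $S = \{0\}$, which is realized by a single point; in all other cases the construction above works verbatim. (If one additionally wanted the space to be Polish when $S$ is, or to control its cardinality, further work would be needed, but for an arbitrary metric space nothing more is required.)
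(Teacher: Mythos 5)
Your construction is correct: $d(x,y)=\max\{x,y\}$ for $x\neq y$ on the point set $S$ itself is indeed a metric (in fact an ultrametric, as your triangle-inequality argument actually establishes the stronger bound $d(x,z)\le\max(d(x,y),d(y,z))$), and the verification that $\Dist(S,d)=S$ is complete, with the role of $0\in S$ correctly isolated in both directions. Note that the paper does not prove this statement at all --- it is quoted from Kelly and Nordhaus as background --- so there is no in-paper proof to match; the nearest relative is Lemma~\ref{lem:countable}, which handles the countable case with the extra demands of completeness and separability by a similar $\max$-based ultrametric, but on a doubled index set $\{x_i\}\cup\{y_i\}$ with $d(x_i,y_i)=a_i$ and cross-distances $\max(a,a_i,a_j)$, precisely so that each target distance is realized between a designated pair while the space stays discrete. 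Your version is simpler and works for arbitrary $S$ exactly because no topological conditions are imposed: as you observe, every nonzero point of your space is isolated (being at distance at least $x$ from everything else), so for uncountable $S$ the space is not separable, which is consistent with the paper's remark that uncountable distance sets of separable spaces must have $0$ as a limit point. One tiny quibble: the case $S=\{0\}$ does not actually need to be set aside, since the construction then yields the one-point space with distance set $\{0\}$ verbatim.
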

Several results characterize which sets can
be distance sets for subsets of $\bbR^n$ (see for instance \cite{kelly}).
More generally, in \cite{kellynordhaus} the authors characterize which sets
can be the set of distances of a separable metric space:

\begin{thm*}[Kelly and Nordhaus] A non-negative set of reals (containing 0)
is the set of distances of some separable metric space if and only if it is
either countable or has 0 as a limit point.
\end{thm*}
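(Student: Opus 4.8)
I would prove the two directions separately; the real work is the construction of a separable space with a prescribed uncountable distance set. \emph{Necessity.} Suppose $(X,d)$ is separable with $\Dist(X,d)=S$; since $X\neq\emptyset$ we have $0\in S$. If $0$ is not a limit point of $S$, choose $\varepsilon>0$ with $S\cap(0,\varepsilon)=\emptyset$. Then distinct points of $X$ lie at distance $\geq\varepsilon$, so $\{B(x,\varepsilon/2):x\in X\}$ is a pairwise disjoint family of nonempty open sets; separability forces it, hence $X$, to be countable, whence $S=d[X\times X]$ is countable.

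\emph{Sufficiency when $S$ is countable.} Realize $S$ by the ``max metric'': on $X=\{p_*\}\cup\{p_a:a\in S\setminus\{0\}\}$ put $d(p_*,p_a)=a$ and $d(p_a,p_b)=\max(a,b)$ for $a\neq b$. This is a metric, since the triangle inequality reduces to instances of $\max(x,y)\le\max(x,z)+\max(z,y)$, which hold because one right-hand term already dominates $\max(x,y)$; its distance set is exactly $S$; and being countable it is separable. This same space works for any $S$ containing $0$, but is generally non-separable once $S$ is uncountable and spread out, so the remaining case demands a different idea.

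\emph{Sufficiency when $0$ is a limit point of $S$} (the only case left, since an uncountable such $S$ necessarily has $0$ as a limit point). Fix a strictly decreasing sequence $(c_n)_{n\ge 1}$ in $S$ with $c_n\to 0$, and set $c_0=+\infty$. As $S':=S\setminus\{0\}$ is a subspace of $\bbR$, hence Lindel\"of, fix countable partitions $\mathcal{P}_0=\{S'\},\mathcal{P}_1,\mathcal{P}_2,\dots$ of $S'$, each refining the previous, with every piece of $\mathcal{P}_n$ of diameter $<c_n$. For $a\neq b$ in $S'$ let $m(a,b)=\sup\{n:a,b\text{ lie in a common piece of }\mathcal{P}_n\}$, which is finite because pieces shrink to points. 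Define $X=\{p_*\}\cup\{p_a:a\in S'\}$ by $d(p_*,p_a)=a$ and
\[ d(p_a,p_b)=\min\bigl(\max(a,b),\,c_{m(a,b)}\bigr)\qquad(a\neq b), \]
with the convention $\min(t,+\infty)=t$. The $c_{m(a,b)}$–term pulls nearby elements of $S'$ together (so the space is separable), while the $\max(a,b)$–term keeps all distances inside $S$ and rescues the triangle inequality.

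\emph{Verification and the main obstacle.} (i) The distance set is $S$: every distance is some $a\in S'$, some $\max(a,b)\in\{a,b\}\subseteq S$, or some $c_n\in S$, while each $a\in S'$ arises as $d(p_*,p_a)$ and $0=d(p_*,p_*)$. (ii) Separability: given $\varepsilon>0$ pick $n$ with $c_n<\varepsilon$; any $a,b$ in a common piece of $\mathcal{P}_n$ have $m(a,b)\ge n$, so $d(p_a,p_b)\le c_{m(a,b)}\le c_n<\varepsilon$; thus one representative per piece of $\mathcal{P}_n$, together with $p_*$, is a countable $\varepsilon$-net, and unioning over $\varepsilon=1/k$ gives a countable dense set. (iii) Metric axioms: symmetry and positivity are clear, and the triangle inequalities through $p_*$ reduce to $|a-b|\le d(p_a,p_b)\le a+b$, which both entries of the $\min$ obey. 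The main obstacle is the triangle inequality among $p_a,p_b,p_c$. I would treat it by observing that $(x,y)\mapsto c_{m(x,y)}$ and $(x,y)\mapsto\max(x,y)$ are both ultrametrics (the former since $m(a,c)\ge\min(m(a,b),m(b,c))$, using that the $\mathcal{P}_n$ refine), then showing that if $c_{m(a,c)}>d(p_a,p_b)+d(p_b,p_c)$ one may relabel so that $d(p_a,p_b)=\max(a,b)$, and finishing via the dichotomy $c\le\max(a,b)$ versus $c>\max(a,b)$, using $d(p_b,p_c)\ge|b-c|$ in the second case. I expect the crux to be exactly this: hitting on the ``$\min$ of a max-metric and an ultrametric'' form of $d$ — essentially forced, since any spurious distance would have to be a small element of $S$ while separability demands many small distances — and then pushing the triangle inequality through.
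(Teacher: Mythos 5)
Your proof is correct, and it is worth noting at the outset that the paper itself gives no proof of this statement: it is quoted from Kelly and Nordhaus, and the paper's own constructions are aimed at the harder \emph{Polish} (complete-metric) analogues. Your necessity argument is the standard one and is fine, and your countable case is essentially the same ``max-metric'' idea as the paper's Lemma~\ref{lem:countable}, minus the bookkeeping the paper needs to keep its space discrete (and hence completely metrizable), which you do not need for mere separability. The substance is your uncountable case, and the construction $d(p_a,p_b)=\min\bigl(\max(a,b),\,c_{m(a,b)}\bigr)$ does work. The only step a reader will balk at is the three-point triangle inequality, since the minimum of two ultrametrics is not in general a metric; but your sketch completes exactly as indicated: if $d(p_a,p_b)=c_{m(a,b)}$ and $d(p_b,p_c)=c_{m(b,c)}$, then the ultrametric inequality for $c_{m(\cdot,\cdot)}$ (which uses that the partitions refine one another) gives $d(p_a,p_c)\le c_{m(a,c)}\le\max\bigl(d(p_a,p_b),d(p_b,p_c)\bigr)$; otherwise at least one of the two distances equals its max-term, and after swapping $a$ and $c$ we may assume $d(p_a,p_b)=\max(a,b)$, whence $d(p_a,p_c)\le\max(a,c)\le\max(a,b)$ unless $c>\max(a,b)\ge b$, in which case $d(p_b,p_c)\ge|b-c|=c-b$ closes the gap. (The inequality $d(p_a,p_b)\ge|a-b|$, which you also need for the triangles through $p_*$, holds because $a$ and $b$ lie in a common piece of $\mathcal{P}_{m(a,b)}$ of diameter less than $c_{m(a,b)}$.) Compared with the tree construction of Theorem~\ref{thm:distance}, your argument is far lighter --- no trees and no analyticity hypothesis --- precisely because you are not required to produce a complete metric: your space is typically incomplete (if $a_n\to a^*$ with $a^*$ a positive real outside $S$, then $\langle p_{a_n}\rangle$ is Cauchy with no limit), and it is exactly the demand for completeness that forces the paper to code $A$ by the branches of a tree and to restrict to analytic sets.
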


Here we will consider {\it Polish} metric spaces,
so we will need to produce complete metrics. 
Two properties of the distance set of a Polish metric space are clear.  
First, since $d$ is a continuous map
from the Polish space $X^{2}$ to $\bbR$, $\Dist(X,d)$ is an analytic set of
non-negative reals containing 0.
Second, if $\Dist(X,d)$  is uncountable then $X$ must also be
uncountable, so 0 must be a limit point of $\Dist(X,d)$
in order for the space to be separable.

It turns out that these two conditions are sufficient for a non-negative set
of reals to be the distance set of some Polish metric space $(X,d)$.
This will be proved in the main result of this paper,
Theorem~\ref{thm:distance}. 
In the next section and Section~\ref{sec:compact} we also consider 
Polish metric spaces which have more restrictive conditions on
their distance sets.  The distances set generally
retains some of the topological properties of the original space, and 
there is also some correspondence between the topological
complexity of distance sets and the complexity of the isometry problem for
a given class of spaces; see \cite{isom}, \cite{cgk}, or \cite{gaokechris} 
for results on these classification problems. For
brevity, we will always assume that 0 is contained in a putative set of
distances.

\section{Spaces with countable distance sets}

We begin by considering countable distance sets, which will simplify our
later results (by countable we mean either finite or countably infinite).
Recall that a metric $d$ is an {\it ultrametric} if for all $x$,
$y$, and $z$ we have $d(x,z) \leq \max(d(x,y),d(y,z))$, which is equivalent 
to saying that the longest two sides in any triangle have the same length.

\begin{lem}
\label{lem:countable}
Let $A$ be a countable set of positive reals. Then there is a discrete
Polish ultrametric space $(X,d)$ with $\Dist(X,d) = A$.
\end{lem}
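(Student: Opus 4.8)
The plan is to build $X$ as a set of finite sequences (or a subtree thereof), with the ultrametric given by the "first place of disagreement" of two sequences, a standard device for producing ultrametric spaces. Enumerate $A = \{a_0, a_1, a_2, \dots\}$ (allowing repetitions if $A$ is finite, or taking it as is). First I would reorganize this enumeration so that the values are presented in nonincreasing order along any branch: the key point is that in a tree-of-sequences ultrametric, the distance between two points is the weight attached to the level where they split, so to get an ultrametric we need the weights to be nonincreasing as we go deeper. Concretely, I would let $b_0 \geq b_1 \geq b_2 \geq \cdots$ be the distinct elements of $A$ listed in decreasing order if $A$ has a largest element, or — if $A$ has no largest element — handle that case first by a separate, easier construction (a countable discrete space like $\bbN$ with a suitable metric realizing any countable set with no maximum is routine, but actually the cleanest uniform approach is below).

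The clean uniform construction: build a tree $T \subseteq \bbN^{<\bbN}$ together with a weight function. Think of it level by level. At the root we want to realize the largest "scale"; but since $A$ may not have a maximum, instead work as follows. Let $A = \{a_n : n \in \bbN\}$. Define $X = \bbN$, and define $d(m,n)$ for $m \neq n$ to be $a_k$ where $k = \min(m,n)$ — wait, that need not give an ultrametric either. The correct move: define $d(m,n) = \sup\{ a_j : j < \min(m,n)\}$... These ad hoc attempts suggest the right framework is genuinely the tree one. So: recursively construct finite approximations $X_n$ with metric $d_n$ such that $X_n \subseteq X_{n+1}$, $d_{n+1}$ extends $d_n$, each $d_n$ is an ultrametric, $\Dist(X_n, d_n) \subseteq A \cup \{0\}$, eventually every $a_k$ appears, and — crucially for completeness and discreteness — each point of $X_n$ has an open ball around it of some fixed positive radius $\varepsilon_n > 0$ that is not refined at later stages near that point, OR more simply ensure each $X_n$ is already closed and the limit is the increasing union. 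At stage $n$, to realize $a_n$: if $a_n$ is larger than or equal to every distance used so far, add a brand new "twin tower" — i.e. take a fresh copy of $X_{n-1}$ and declare the distance between any point of the old copy and any point of the new copy to be $a_n$; by the ultrametric inequality this is consistent precisely because $a_n \geq$ all existing distances. If $a_n$ is smaller, pick a point $x$ realizing (as one of its existing distances) some value $\geq a_n$, or if $a_n$ is below everything, just adjoin a single new point at distance $a_n$ from one existing point and at distance $\max(a_n, \text{old dist})$ from the others — again ultrametric-consistent.

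The main obstacle, and the step I'd spend the most care on, is getting a single bookkeeping scheme that simultaneously (i) realizes every element of $A$, (ii) keeps the metric an ultrametric at every finite stage regardless of whether $A$ has a maximum, has $0$ as a limit point, is dense-in-itself, etc., and (iii) guarantees the final space is \emph{discrete} and \emph{complete} — discreteness because in an ultrametric space of this kind each point $x$ added at a finite stage should retain a punctured neighborhood of radius equal to the smallest distance from $x$ to anything, but if $0$ is a limit point of $A$ we must make sure we never place infinitely many points shrinking toward a single point. The fix for (iii): when $a_n \to 0$ along a subsequence, always realize such small $a_n$ by the "twin tower" construction at the top (pairing up two disjoint finite pieces far apart in the tree's indexing) rather than by crowding points together, OR ensure that each new point introduced is isolated with a uniform lower bound on its isolation radius by only ever attaching new material "above" (at larger scale) old material. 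With the twin-tower-only discipline for the global structure and careful local additions, discreteness follows because every point's nearest neighbor is fixed once and for all at the stage it is introduced, and completeness follows because a $d$-Cauchy sequence in a discrete space is eventually constant. I would close by checking $\Dist(X,d) = A$: "$\subseteq$" is maintained by construction, and "$\supseteq$" holds since stage $n$ forces $a_n$ into the distance set.
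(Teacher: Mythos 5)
Your overall strategy (finite ultrametric approximations built by one-point extensions and ``twin towers'') can be made to work, and your one-point extension step --- adjoining $z$ with $d(z,x)=a_n$ and $d(z,w)=\max(a_n,d(x,w))$ for the other $w$ --- is indeed ultrametric-consistent. But the argument has a genuine gap at exactly the point you yourself flag as the main obstacle, and neither of the two fixes you offer survives scrutiny. The first fix, realizing a small $a_n$ as the cross-distance between ``two disjoint finite pieces far apart in the tree's indexing,'' violates the ultrametric inequality outright: if $u,u'$ lie in one piece with $d(u,u')$ large and $v$ lies in the other piece, then $d(u,u')>\max(d(u,v),d(v,u'))=a_n$. (As you note, the twin-tower move is only consistent when $a_n$ dominates all existing distances, so it cannot be recycled for small $a_n$.) The second fix, ``only ever attaching new material at larger scale,'' cannot realize the elements of $A$ lying below the distances already present. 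Separately, your completeness argument --- ``a $d$-Cauchy sequence in a discrete space is eventually constant'' --- is false as stated: $\{1/n : n\ge 1\}\subseteq\bbR$ is discrete, yet $\langle 1/n\rangle$ is Cauchy, non-constant, and has no limit in the space. Discreteness does not imply completeness; you need a quantitative separation property of your particular construction.

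The missing idea is to realize each small distance inside a fresh, isolated two-point cluster rather than by attaching new points to old ones. This is what the paper does, and it collapses the whole induction into one formula: fix some $a\in A\setminus\{0\}$, enumerate $A\setminus\{0\}=\{a_i : i\in\bbN\}$, take $X=\{x_i\}\cup\{y_i\}$, and set $d(x_i,y_i)=a_i$ with all other distances equal to $\max(a,a_i,a_j)$. Every distance between points of different pairs is at least $a>0$, so a Cauchy sequence is eventually trapped in a single pair $\{x_i,y_i\}$ and hence eventually constant (which gives completeness, not merely discreteness), while each pair realizes $a_i$ and the $\max$ formula keeps every triangle ultrametric. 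If you adopt ``fresh pair per $a_n$'' as your discipline for small distances and replace the discreteness-implies-completeness claim by the observation that all cross-pair distances are bounded below by $a$, your plan closes and becomes essentially the paper's proof.
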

\begin{proof}
Let $A \setminus \{0\} = \{ a_i : i \in \bbN \}$, where we allow repetitions 
when $A$ is finite, and let $a \neq 0$ be some fixed element of $A$. Let  
\[ X= \{x_i : i \in \bbN\} \cup \{y_i : i \in \bbN\} \]
and define $d$ by:
\begin{align*}
d(x_i,y_i) & = a_i \\
d(x_i,x_j) & = d(y_i,y_j) = d(x_i,y_j) = \max(a,a_i,a_j)
 \text{ for $i \neq j$}  .
\end{align*}
It is straightforward to check that this defines a discrete Polish metric
space since there are no non-trivial Cauchy sequences. To verify that it is
an ultrametric, it suffices to consider two representative cases.
First, let $x_i$, $x_j$, and $x_k$ be distinct, where we may assume
$a_i \leq a_j \leq a_k$. 
Then $d(x_i,x_k) = d(x_j,x_k) = \max(a,a_k) \geq \max(a,a_j) = d(x_i,x_j)$. 
Second, given $x_i$, $y_i$, and $x_j$ with $i\neq j$
we have $d(x_i,x_j) = d(y_i,x_j) = \max(a,a_i,a_j) \geq a_i = d(x_i,y_i)$.
\end{proof}

Every discrete Polish metric space is countable and hence has a countable
distance set, as does a Polish ultrametric space (since its distance set
is equal to the distance set of a countable dense subset). We thus have:

\begin{cor} A set $A \subseteq [0,\infty)$ is the distances set of some
discrete Polish metric space if and only if $A$ is the distance set of some
Polish ultrametric space if and only if $A$ is countable.
\end{cor}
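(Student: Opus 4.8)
The plan is to establish a cycle of implications around Lemma~\ref{lem:countable}, since that lemma already carries all of the substantive content. Concretely I would prove: (i) if $A$ is countable, then $A$ is the distance set of a single space that is simultaneously discrete Polish and Polish ultrametric; (ii) the distance set of any discrete Polish metric space is countable; and (iii) the distance set of any Polish ultrametric space is countable. These three facts together close the loop and give all the stated equivalences.

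For (i), apply Lemma~\ref{lem:countable} to $A \setminus \{0\}$ (which is countable since $A$ is): this yields a discrete Polish ultrametric space whose distance set is $A \setminus \{0\}$, and $0$ is automatically realized as $d(x,x)$, so the distance set is exactly $A$. The degenerate cases $A = \emptyset$ and $A = \{0\}$ are handled by taking the empty space or a one-point space. For (ii), note that in a discrete space the whole space is its only dense subset, so a separable discrete space is countable; hence $X$ is countable, $X^2$ is countable, and $\Dist(X,d)$ is a countable subset of $\bbR$.

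Step (iii) is the only one needing an actual argument, and it is where the ultrametric inequality is used. Fix a countable dense set $D \subseteq X$; I claim $\Dist(X,d) = \{d(u,v) : u,v \in D\}$, which is countable. The inclusion $\supseteq$ is clear, as is membership of $0$, so let $x \neq y$ in $X$ and put $r = d(x,y) > 0$. Choose $u, v \in D$ with $d(x,u) < r$ and $d(y,v) < r$. The isosceles-triangle property of ultrametrics (the two longest sides of any triangle are equal) gives $d(u,y) = d(x,y) = r$ from $d(x,u) < r$, and then $d(u,v) = d(u,y) = r$ from $d(y,v) < r$; thus $r$ is realized inside $D$.

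I do not anticipate a genuine obstacle: beyond the bookkeeping around the convention that $0$ always belongs to a distance set (and the two trivial degenerate cases in step (i)), the only nontrivial inputs are Lemma~\ref{lem:countable} for direction (i) and the isosceles-triangle characterization of ultrametrics for direction (iii).
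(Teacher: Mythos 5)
Your proposal is correct and follows essentially the same route as the paper: Lemma~\ref{lem:countable} supplies the countable-to-space direction, discreteness plus separability gives countability of the space, and the ultrametric case reduces to a countable dense subset. The paper states the last reduction in one clause without proof; your isosceles-triangle argument is exactly the justification it leaves implicit.
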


We consider in more detail a common special case.

\begin{prop} 
A set $A \subseteq [0, \infty)$ is the set of distances of some perfect, compact, ultrametric space if and only if $A$ can be enumerated as a countable decreasing sequence $\langle d_i : i \geq 0 \rangle$ with $\lim_{i \rightarrow \infty} d_i = 0$.
\end{prop}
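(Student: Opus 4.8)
The statement is a characterization, so I need to prove both directions. The easier direction is necessity: if $(X,d)$ is perfect, compact, and ultrametric, I must show $\Dist(X,d)$ can be enumerated as a decreasing sequence converging to $0$. The harder (constructive) direction is sufficiency: given a decreasing sequence $\langle d_i : i \geq 0\rangle$ with $d_i \to 0$, I must build a perfect compact ultrametric space whose distance set is exactly $\{d_i : i \geq 0\}$.

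For necessity, first note that $\Dist(X,d)$ is countable (the space is ultrametric, so as remarked before the proposition its distance set equals that of a countable dense subset; or just: a compact ultrametric space is second countable and its distance set is analytic and — being ultrametric — countable). Compactness forces $\Dist(X,d)$ to be compact as the continuous image of $X^2$, hence closed and bounded. A countable compact subset of $[0,\infty)$ has only one possible limit point structure relevant here: I would argue that $0$ is the unique limit point. Indeed, if $\varepsilon > 0$ were a limit point of the distance set, then by compactness there would be infinitely many pairs realizing distances clustering near $\varepsilon$; passing to convergent subsequences of the points (using compactness of $X$) and using that an ultrametric takes only finitely many values on any neighborhood in which points are mutually far apart would yield a contradiction — more carefully, the relation $x \sim_\varepsilon y \iff d(x,y) < \varepsilon$ is an equivalence relation (ultrametric!), its classes are open, and compactness gives finitely many classes, so only finitely many distances are $\geq \varepsilon$. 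Hence every $\varepsilon > 0$ is isolated-from-above in the distance set, so $0$ is the only limit point; a countable set with unique limit point $0$ is exactly a decreasing sequence converging to $0$ (after discarding repetitions and reordering). I should also note $d_0 = \operatorname{diam}(X)$ is attained since $X$ is compact.

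For sufficiency, the natural construction is a Cantor-like space. Given the decreasing sequence $\langle d_i \rangle \to 0$, I would build a finitely-branching tree and take $X$ to be its set of branches, assigning $d(\sigma, \tau) = d_{|s|}$ where $s$ is the longest common initial segment of branches $\sigma, \tau$. To make this an ultrametric this is automatic; to make $X$ compact I need the tree finitely branching; to make $X$ perfect I need every node to have at least two successors (no isolated branches); and to make $\Dist(X,d)$ equal to exactly $\{d_i : i \geq 0\}$ I must ensure each level of the tree is nonempty (so $d_i$ is realized for each $i$) and that no other distances creep in (automatic, since the only distances are the $d_i$). The cleanest choice: let the tree be $2^{<\omega}$ (full binary tree) with the $n$-th level of branch-splitting labeled by $d_n$; set $d(\sigma,\tau) = d_n$ where $n$ is least with $\sigma(n) \neq \tau(n)$, and $d(\sigma,\sigma)=0$. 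Then $X = 2^\omega$ is compact and perfect, $d$ is an ultrametric (the longest two sides of any triangle agree, since of the three indices of first disagreement among three branches, the two smallest coincide), and $\Dist(X,d) = \{0\} \cup \{d_n : n \in \bbN\}$, which is the given set.

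**Main obstacle.** The genuinely delicate point is the necessity direction — specifically showing that no positive real can be a limit point of the distance set. The slick argument uses the ultrametric equivalence relations $\sim_\varepsilon$ together with compactness to bound the number of distances $\geq \varepsilon$; getting this argument stated cleanly (and handling the fact that we want a strictly decreasing enumeration with limit $0$, which also needs $\operatorname{diam}(X) > 0$ from perfectness so the sequence is genuinely infinite) is where care is needed. The sufficiency direction is essentially a routine verification once the binary-tree model is written down, though one should double-check that the metric as defined actually induces the product topology on $2^\omega$ — which it does, since the balls of radius $d_n$ are exactly the basic clopen cylinders determined by the first $n$ coordinates.
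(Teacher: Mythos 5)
Your proposal is correct and follows essentially the same route as the paper: the same Cantor-space construction $d(\sigma,\tau)=d_{n}$ with $n$ the first index of disagreement for sufficiency, and for necessity the same key fact that a compact ultrametric space has only finitely many distances $\geq\varepsilon$ (your argument via the open classes of $\sim_{\varepsilon}$ supplies the justification the paper leaves as an observation). The only minor slip is attributing the infinitude of the distance set to $\operatorname{diam}(X)>0$; what perfectness actually gives is distances arbitrarily close to $0$, as the paper notes.
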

\begin{proof}
Suppose $A = \{d_i : i \geq 0\}$ with $d_i > d_{i+1}$ and $\lim_{i
\rightarrow \infty} d_i = 0$. We take as our underlying set $X = 2^{\bbN}$.
For $\alpha, \beta \in 2^{\bbN}$ with $\alpha \neq \beta$ we define
\[ d(\alpha,\beta) = d_{n(\alpha,\beta)}, \]
where $n(\alpha,\beta)$ is the least $n$ such that $\alpha(n) \neq
\beta(n)$.
This metric is equivalent to the usual metric on the Cantor space 
$2^{\bbN}$ given by $d(\alpha,\beta) = 2^{-n(\alpha,\beta)}$,
and hence the space is compact and perfect. To see that the
metric is an ultrametric, note that for $\alpha$, $\beta$ and $\gamma$ we have
$n(\alpha,\gamma) \geq \min( n(\alpha,\beta), n(\beta,\gamma))$
so $d(\alpha,\gamma) \leq \max( d(\alpha,\beta), d(\beta,\gamma))$.

For the other direction, let $(X,d)$ be a perfect, compact, ultrametric
space. Since the space is perfect it must have distances arbitrarily close
to 0. It is then sufficient to observe that in a compact ultrametric space
$(X,d)$, for any $b >0$ the set 
\[ \{ d(x,y) : \text{$x,y \in X$ and $d(x,y) \geq b$} \} \]
is finite, since we can then take $d_0$ to be the largest distance, $d_1$ to be the next largest, and so forth.
\end{proof}

\section{The main result}
\label{sec:main}

\begin{thm}
\label{thm:distance}
A set $A \subseteq [0, \infty)$ is the distance set of some Polish metric 
space if and only if $A$ is either countable or is an analytic set with 
$0$ as a limit point.  
\end{thm}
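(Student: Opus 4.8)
The forward direction (necessity) is already established in the discussion preceding the theorem: the distance map $X^2 \to \bbR$ is continuous, so $\Dist(X,d)$ is analytic and contains $0$; and if $\Dist(X,d)$ is uncountable then $X$ is uncountable, hence (by separability) $0$ is a limit point. By Lemma~\ref{lem:countable} the countable case of sufficiency is already handled, so the remaining task is: given an analytic set $A \subseteq [0,\infty)$ with $0 \in A$ and $0$ a limit point of $A$, construct a Polish metric space $(X,d)$ with $\Dist(X,d) = A$. I would first reduce to a convenient presentation of $A$: since $A$ is analytic, fix a continuous surjection $f \colon \bbN^{\bbN} \to A \setminus \{0\}$ (or a closed set $F \subseteq \bbN^\bbN \times \bbN^\bbN$ projecting onto $A\setminus\{0\}$), so that the distances we need to realize are exactly the values $f(\sigma)$ for $\sigma$ ranging over Baire space.

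The heart of the construction is to build $X$ out of $\bbN^{\bbN}$ (or a subspace of it, or $\bbN^\bbN \times$ something), attaching to each branch $\sigma$ a pair of points at distance $f(\sigma)$, in a way that (a) no new distances are accidentally created, (b) all the required distances actually appear, and (c) the resulting space is Polish — i.e., the metric is complete and separable. Separability will be easy since $\bbN^\bbN$ is separable and we are adding at most continuum-many points in a controlled way; the real work is completeness together with distance control. The natural idea, generalizing the ultrametric trick of Lemma~\ref{lem:countable}, is to make distances between points over \emph{distinct} branches $\sigma, \tau$ depend only on $n(\sigma,\tau)$ (the splitting level) plus the specific target values $f(\sigma), f(\tau)$, set to something like $\max$ of a quantity shrinking with the splitting level and the two target values — but one must be careful, because if $f(\sigma)$ can be large while $\sigma,\tau$ agree to a high level, a naive $\max$ destroys the triangle inequality or completeness. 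The fix is presumably to first arrange, by composing $f$ with a suitable re-coordinatization of $\bbN^\bbN$, that $f(\sigma)$ depends "continuously and boundedly" on $\sigma$ in the sense that $f(\sigma)$ is controlled by an early coordinate of $\sigma$, so that branches agreeing to a high level have comparable target values; then a $\max$-type (ultrametric-flavored) formula works and completeness follows because a Cauchy sequence either is eventually constant in its branch or has branches converging in $\bbN^\bbN$ while the attached target values converge to $0$.

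I would organize the argument as follows. \textbf{Step 1:} State the precise reformulation of sufficiency and dispose of the countable case via Lemma~\ref{lem:countable}, so we may assume $A$ uncountable, analytic, with $0$ a limit point. \textbf{Step 2:} Fix a continuous surjection $f\colon \bbN^\bbN \to A\setminus\{0\}$ and, by pre-composing with a homeomorphism of $\bbN^\bbN$ and possibly padding coordinates, arrange that $f$ has a convenient modulus-of-continuity/boundedness property keyed to the coordinates of $\sigma$ (this is the technical normalization that makes everything else go through). \textbf{Step 3:} Define $X$ as a set — points indexed by pairs $(\sigma, \epsilon)$ with $\sigma \in \bbN^\bbN$, $\epsilon \in \{0,1\}$ (two points per branch) — and define $d$ by: $d((\sigma,0),(\sigma,1)) = f(\sigma)$, and for points over different branches $\sigma \neq \tau$, $d = g(n(\sigma,\tau), f(\sigma), f(\tau))$ for an explicit $g$ of max/shrinking type. \textbf{Step 4:} Verify $d$ is a metric (triangle inequality is the case-check, exactly as in Lemma~\ref{lem:countable} but with more cases). \textbf{Step 5:} Verify $\Dist(X,d) = A$: every $f(\sigma) \in A\setminus\{0\}$ appears by construction, and one checks the cross-branch values also lie in $A$ (this is where the limit-point hypothesis on $A$ and the normalization of Step 2 get used — the shrinking part of $g$ should take values in a sequence from $A$ tending to $0$). \textbf{Step 6:} Verify completeness and separability, hence Polishness.

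\textbf{The main obstacle.} The delicate point is Step 2 together with ensuring in Step 5 that \emph{no distance outside $A$ is created} while \emph{completeness still holds}. These pull in opposite directions: to keep the space complete we want cross-branch distances to behave like an ultrametric controlled by splitting level (so Cauchy sequences converge), but to keep all distances inside $A$ we can only use values actually in $A$, and $A$ need not be closed under $\max$ or contain any nice sequence other than \emph{some} sequence tending to $0$. Reconciling these — presumably by first replacing $f$ with $\min(f(\sigma), e_{k})$-type truncations where $e_k \in A$, $e_k \to 0$, indexed so that the truncation level $k$ is read off from $\sigma$, so that cross-branch distances can always be taken from the $e_k$'s or from the (truncated) target values themselves — is the crux of the construction, and is exactly the place where the "simplified proof" alluded to in the acknowledgement presumably streamlines the thesis argument. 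I expect the rest (metric axioms, separability) to be routine case-checking of the kind already modeled in Lemma~\ref{lem:countable}.
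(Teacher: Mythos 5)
Your proposal correctly isolates the necessary conditions, the reduction to the uncountable case via Lemma~\ref{lem:countable}, and the three ingredients the construction really needs: a tree representation of $A$, a designated sequence in $A$ tending to $0$ to serve as the small cross-branch distances, and a ``Lipschitz'' normalization so that branches agreeing to a high level code nearby values. However, the architecture you commit to in Step 3 has a concrete flaw, and the step you yourself flag as the crux is exactly the step you do not supply. The flaw: if each value $f(\sigma)$ is realized as the distance between a dedicated pair $(\sigma,0),(\sigma,1)$, and distances between points over distinct branches $\sigma\neq\tau$ depend only on $n(\sigma,\tau)$, $f(\sigma)$, $f(\tau)$ (not on the labels), then the triangle through $(\sigma,0),(\sigma,1),(\tau,i)$ gives $f(\sigma)\leq 2g(\cdot)$, so every cross-branch distance involving $\sigma$ is at least $f(\sigma)/2$. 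Hence $\{(\sigma,0): f(\sigma)\geq\delta\}$ is a $(\delta/2)$-separated set; since $f$ surjects onto $A\setminus\{0\}$ and an uncountable $A$ has $A\cap[\delta,\infty)$ uncountable for some $\delta>0$, this set is uncountable and the space cannot be separable. No choice of $g$ repairs this; the ``pair per branch'' scheme, which works in the countable Lemma~\ref{lem:countable}, cannot scale to uncountable $A$.

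The paper's construction avoids this by realizing each $a\in A$ not as the distance within a pair but as the distance from one distinguished point (the branch over $0^{\infty}$) to a branch $(\alpha,\beta)\in[T]$ whose first coordinate $\alpha$ is literally a binary expansion of $a$. The map $\alpha\mapsto\pi(\alpha)$ is then automatically $1$-Lipschitz --- which is how your Step 2 normalization is actually achieved; precomposing a continuous surjection from $\bbN^{\bbN}$ with a homeomorphism cannot in general produce a uniform modulus of continuity --- and two branches coding large but nearby values are allowed to be genuinely close, the only constraint from the apex triangle being $d(x_1,x_2)\geq|\pi(\alpha_1)-\pi(\alpha_2)|$, which the Lipschitz property supplies. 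The remaining delicate content (the three-clause definition of $d$ using the chosen $\epsilon_i$'s and the fallback values $d_s$, the case analysis for the triangle inequality, and the completeness argument) is precisely what your outline defers, so as it stands the proposal is a plan whose decisive construction is missing and whose stated realization scheme is incompatible with separability.
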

\begin{proof}
The conditions are necessary, as noted earlier. The case of $A$ countable was
handled by Lemma~\ref{lem:countable}, so let
$A$ be analytic with $0$ as a limit point.
We will first assume that $A \subseteq [0,1)$, 
and handle the general case at the end. We may identify sequences from 
$\{0,1\}$ with reals in $[0,1]$ via the map
\[ \pi: \alpha \mapsto \sum_{i \in \bbN} \frac{\alpha(i)}{2^{i+1}} . \]
This is a continuous surjection from the Cantor space $\c = 2^{\bbN}$ onto
$[0,1]$, so the inverse image of an analytic set of reals is an
analytic subset of the Cantor space.  The map is one-to-one 
except for those points with eventually constant binary representations,
where it is two-to-one.
We may thus represent $A$ by an analytic subset of the Cantor space, which
will be the projection of a closed subset of $\c \times \n$
(where $\n$ is the Baire space $\bbN^{\bbN}$), and hence
the projection of the infinite branches $[T]$ of a pruned tree $T$ on 
$2 \times \bbN$.
We fix a pruned tree $T$ on $2 \times \bbN$ such that
\[ a \in A \Longleftrightarrow (\exists \alpha) (\exists \beta) 
\ [\forall n \ (\alpha\upharpoonright n, \beta \upharpoonright n ) \in T \text{ and }
\pi(\alpha)=a] . \]
For technical reasons, we choose $T$ so that there is a unique branch
projecting to 0 (i.e., to the infinite sequence $0^{\infty}$); 
since $A \setminus \{0\}$ is still an analytic set we can start
with a tree projecting to $A \setminus \{0\}$ and add the branch
$(0^{\infty},0^{\infty})$. We now let
\[ T^{*} = \{ s \in 2^{< \bbN} : (\exists b \in \bbN^{< \bbN})\ [(s,b)
\in T] \} . \]
Then $T^{*}$ is a pruned tree on 2 with $[T^{*}] = \overline{\pi^{-1}[A]}$, 
the closure of $\pi^{-1}[A]$ in $2^{\bbN}$. 
For each $s \in T^{*}$, we can thus pick $\alpha_s \in \pi^{-1}[A]$ with 
$s \sqsubset \alpha_s \neq 0^{\infty}$,
and let $d_s = \pi(\alpha_s) \in A \setminus \{0\}$.
Choose finally a decreasing sequence $\langle \epsilon_i \rangle_{i \in 
\bbN}$ from among the $d_s$'s with  
$\epsilon_i \in A$ and $\epsilon_i < \frac{1}{2^i}$, which
is possible since 0 is a limit point of $A$ and the $d_s$'s are dense in
$A$.

We set $X = [T]$, and define $d$ as follows. For $x_1=(\alpha_1, \beta_1)$ and
$x_2=(\alpha_2, \beta_2)$ with $x_1 \neq x_2$ we let $(s,b)$ be the maximal 
mutual predecessor of $x_1$ and $x_2$ in $T$. We then let:
\[ d(x_1,x_2) = \begin{cases} \max (\pi(\alpha_1), \pi(\alpha_2)) & 
\text{if $s = 0^k$ for some $k$} \\
\text{least $\epsilon_i$ s.t. $2^{-|s|} \leq \epsilon_i \leq d_s$} & 
\text{if $s \neq 0^k$ and such $\epsilon_i$ exists} \\
d_s & \text{otherwise} \end{cases} \]

We check that this is a metric, which amounts to verifying the triangle
inequality. Fix $x_i = (\alpha_i, \beta_i)$ distinct for $i = 1,2,3$.
\begin{enumerate}
\item If $x_1$, $x_2$ and $x_3$ have a mutual maximal predecessor $(s,b)$,
then:
\begin{enumerate} 
\item If $s= 0^k$ then each distance $d(x_i,x_j)=
\max(\pi(\alpha_i),\pi(\alpha_j))$,
so this is an ultrametric triangle (the longest two distances are equal).
\item If $s \neq 0^k$ then all the distances are the same, since they
depend only on $s$. 
\end{enumerate}
\item Otherwise, two of the branches agree longer than they do with the
third. We may assume that we have $(t,c) \sqsubset (s,b)$ with $(s,b)$ the
maximal mutual predecessor of $x_1$ and $x_2$, and $(t,c)$ the maximal
predecessor of $x_3$ and $(s,b)$.  We have three sub-cases:
\begin{enumerate}
\item If $s = 0^k$ then again this is an ultrametric triangle.
\item If $s \neq 0^k$ and $t = 0^j$ then 
\begin{align*}
d(x_1,x_3) & = \max(\pi(\alpha_1), \pi(\alpha_3)) \\
d(x_2,x_3) & = \max (\pi(\alpha_2), \pi(\alpha_3)) \\
2^{-|s|} \leq d(x_1,x_2) & \leq d_s \leq \pi(s) + 2^{-|s|},
\end{align*}
where we identify $s$ with the sequence $s$ followed by 
all 0's.  Note that since $s \neq 0^{|s|}$ we have $2^{-|s|} \leq \pi(s)$
and so
\[ | \pi(\alpha_1) - \pi(\alpha_2) | \leq 2^{-|s|} \leq d(x_1,x_2) 
\leq \pi(s) + 2^{-|s|} \leq \pi(\alpha_1) + \pi(\alpha_2) . \]
We then see that in all cases we have $| d(x_1,x_3)-d(x_2,x_3)| \leq
|\pi(\alpha_1) - \pi(\alpha_2)|$, so that we have
\[ | d(x_1,x_3)- d(x_2,x_3) | \leq d(x_1,x_2) \leq d(x_1,x_3)+d(x_2,x_3) , \]
which guarantees the triangle inequality.
\item If $t \neq 0^j$ then $d(x_1,x_3) = d(x_2,x_3)$ since these
distances depend only on $t$. 
If this distance is some $\epsilon_i$ then we have $d(x_1,x_2)$ less than
or equal to this distance, since it will either be this $\epsilon_i$ or some
smaller $\epsilon_j$. If this distance is $d_t$, then we have 
$d(x_1,x_2) \leq d_s \leq d_t + 2^{-|t|} \leq 2 \cdot d_t$ since 
$t \neq 0^j$, so in all cases the triangle inequality holds here.
\end{enumerate}
\end{enumerate}

Thus $d$ is a metric. To check that it is complete, let $\langle x_i
\rangle$ be a $d$-Cauchy sequence with $x_i = (\alpha_i,\beta_i) \in [T]$.
Since $2^{\bbN}$ is compact, there is a subsequence $\langle a_{i_n}
\rangle_{n \in \bbN}$ converging to some $a_{\infty}$ in the usual topology of
$2^{\bbN}$. If $a_{\infty} = 0^{\infty}$ then $\langle x_{i_n} \rangle$ (and
hence $\langle x_i \rangle$) converges to the point
$(0^{\infty},0^{\infty})$ in $X$.
If $a_{\infty} \neq 0^{\infty}$, then both the sets $\{\pi(\alpha_{i_n}) :
n \in \bbN\}$ and $\{ d_s : \text{$s \sqsubset \alpha_{i_n}$ for some $n
\in \bbN$}\}$ are bounded away from 0, so for sufficiently small $\epsilon$
and large enough $n$ and $m$ so that $d(x_{i_n},x_{i_m}) < \epsilon$ we
must have (per the second clause of the definition of $d$) that
$d(x_{i_n},x_{i_m})$ is equal to some $\epsilon_i$ with $2^{-|s|} \leq
\epsilon_i < \epsilon$, where $(s,b)$ is the maximal mutual predecessor
of $x_{i_n}$ and $x_{i_m}$ in $T$. Hence the mutual predecessors must have large
length, so that the sequence $\langle x_{i_n} \rangle$
converges in the usual topology of $2^{\bbN} \times \bbN^{\bbN}$. Since $[T]$ is
closed in $2^{\bbN} \times \bbN^{\bbN}$, $\langle x_{i_n} \rangle$
converges to some $(\alpha_{\infty},\beta_{\infty}) \in [T]=X$ in this
topology, and must also converge according to $d$, so the sequence $\langle
x_i \rangle$ converges to this point as well.

To see that $(X,d)$ is separable, we pick for each $(s,b) \in T$ some 
branch $(\alpha, \beta)$ in $[T]$ extending $(s,b)$. 
This yields a countable set $D$, and for any branch $(\alpha', \beta') \in
[T]$ if we choose the $(\alpha,\beta) \in D$ corresponding to $(\alpha'
\upharpoonright n, \beta' \upharpoonright n)$ then
$d((\alpha',\beta'),(\alpha,\beta)) < 2^{-n +1}$.
Finally, it is clear that $\Dist(X,d) = A$, since all of the distances defined
are in $A$, and distances from the branch with $\alpha = 0^{\infty}$ will
include all elements of $A$. This completes the proof when $A \subseteq
[0,1)$.

For the general case of $A \subseteq [0, \infty )$, let $A_n = A \cap
[0,n)$.  Since the $A_n$'s satisfy the hypotheses of the theorem, 
we can construct
$(X_n,d_n)$ as above (stretching the metric by $n$) such that 
$\Dist(X_n,d_n) = A_n$.  Now choose a sequence $\langle \delta_n
\rangle$ with $\delta_n \in A_n$, $\delta_n \geq \frac{1}{2}\sup A_n$, and
$\delta_n \leq \delta_{n+1}$.  We let $X$ be the disjoint union
$\bigsqcup_{n \geq 1} X_n$.  We set $d = d_n$ on each $X_n$, and for $x
\in X_n$, $y \in X_m$ with $n<m$ we let $d(x,y) = \delta_m$.  The
conditions on the $\delta_n$'s guarantee that this is a complete 
metric and adds no
distances other than those in the $A_n$'s, so that $\Dist(X,d) = \bigcup_{n
\geq 1} A_n = A$.
\end{proof}

There is a lack of uniformity in the above construction.  For a given
set $A$, we used a tree representation of $A$ in order to
construct our space, and different trees may give rise to non-isometric
spaces. A subsequent article (\cite{dist2}) we will consider how close
the distance set is to being a complete invariant for isometry, and show
that it is very far from being complete. In fact, for many analytic sets
$A$ the classification of spaces with distance set $A$ up to isometry is as
complicated as the classification of all Polish metric spaces.

The main theorem produces the following corollary. Recall that a 
{\it zero-dimensional} space is one in which there is a basis consisting
of clopen sets.  Although zero-dimensional spaces are a special class of
Polish metric spaces, their distance sets
can be as complicated as those of arbitrary Polish metric spaces:

\begin{cor} A set $A \subseteq [0,\infty)$ is the set of distances of some
zero-dimens\-ional Polish metric space if and only if either $A$ is countable
or $A$ is analytic with 0 as a limit point.
\end{cor}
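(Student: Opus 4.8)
The plan is not to build a new space but to verify that the spaces already constructed are zero-dimensional. Necessity is immediate: a zero-dimensional Polish metric space is in particular a Polish metric space, so this direction follows from Theorem~\ref{thm:distance}. For sufficiency, if $A$ is countable then the space produced by Lemma~\ref{lem:countable} is discrete, so every singleton is clopen and the space is zero-dimensional. So suppose $A$ is analytic with $0$ a limit point; I claim the space $(X,d)$ built in the proof of Theorem~\ref{thm:distance} is zero-dimensional. It is enough to treat the case $A \subseteq [0,1)$: in the general construction $X = \bigsqcup_n X_n$ and each $X_n$ is $d$-clopen in $X$ (the ball of radius $\delta_n$ about any point of $X_n$ is contained in $X_n$, since a point of $X_m$ with $m \neq n$ is at distance $\delta_{\max(m,n)} \geq \delta_n$), so a clopen basis for each $X_n$ yields one for $X$.

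So fix $A \subseteq [0,1)$ and recall $X = [T] \subseteq 2^{\bbN}\times\bbN^{\bbN}$, with $p = (0^{\infty},0^{\infty})$ the unique branch of $T$ whose first coordinate is $0^{\infty}$. I would show that the following sets form a basis of $d$-clopen sets: the usual basic clopen sets $N_{(s,b)} = \{(\alpha,\beta)\in[T] : s \sqsubset \alpha \text{ and } b \sqsubset \beta\}$ for $(s,b)\in T$ with $s \neq 0^{|s|}$, together with the sets $U_J = \{(\alpha,\beta)\in[T] : \alpha\upharpoonright J = 0^J\}$ for $J\in\bbN$. Two facts drive the verification. First, the first clause in the definition of $d$ gives $d(p,(\alpha,\beta)) = \pi(\alpha)$, so $B_d(p,2^{-J}) \subseteq U_J \subseteq \{x : d(p,x)\leq 2^{-J}\}$ and $\{U_J\}_J$ is a neighborhood basis at $p$. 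Second, at any point $x = (\alpha,\beta)$ with $\pi(\alpha) > 0$ the $d$-topology coincides with the subspace topology inherited from $2^{\bbN}\times\bbN^{\bbN}$: if $d(x,y)$ is small (less than $\pi(\alpha)$) then the maximal mutual predecessor $(s,b)$ of $x$ and $y$ in $T$ must satisfy $s \neq 0^{|s|}$ — otherwise $d(x,y) = \max(\pi(\alpha),\pi(\alpha')) \geq \pi(\alpha)$ — and then $d(x,y) \geq 2^{-|s|}$, forcing $|s|$ large; conversely, if $x$ and $y$ agree on a long initial segment $s$ (which then has $s \neq 0^{|s|}$), then $d_s$ is within $2^{-|s|}$ of $\pi(\alpha)$, so the least $\epsilon_i$ lying in $[2^{-|s|},d_s]$ exists and is small, whence $d(x,y)$ is small. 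Granting these, each $U_J$ and each $N_{(s,b)}$ with $s\neq 0^{|s|}$ is $d$-clopen: openness is clear, and closedness follows because the complement contains no point with $\pi(\alpha)=0$ apart from possibly $p$, near which a suitable $U_{J'}$ is disjoint from the set. Finally they form a basis: near $p$ use the $U_J$, and at a point $x\neq p$ use that the two topologies agree there together with an intersection $N_{(s,b)} \cap (X\setminus U_J)$ to excise $p$ if needed.

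The main obstacle is the point $p$. There the $d$-topology is genuinely strictly coarser than the product topology — two branches agreeing far out in their first coordinate but differing early in their second are $d$-close, since $d$ there depends only on $\max(\pi(\alpha_1),\pi(\alpha_2))$ — so one cannot simply invoke zero-dimensionality of $[T]$ in the product topology; the content of the argument is precisely that this coarsened topology at $p$ is still generated by the clopen sets $U_J$. A secondary point needing care is the third, ``otherwise,'' clause $d(x_1,x_2)=d_s$ in the definition of $d$: one must exploit the density near $0$ of the chosen sequence $\langle\epsilon_i\rangle$ to ensure that once $x$ and $y$ agree far enough their distance is computed by the middle clause, so that long agreement really does force small distance — without this the $d$-topology and the product topology need not agree off $p$.
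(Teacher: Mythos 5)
Your proposal is correct and follows essentially the same route as the paper: verify that the spaces already constructed are zero-dimensional, observe that away from the distinguished branch $(0^{\infty},0^{\infty})$ the metric topology agrees with the subspace topology of $\mathcal{C}\times\mathcal{N}$, and exhibit a clopen neighborhood basis at that one exceptional point. The only difference is cosmetic: your neighborhoods $U_J$ are cut out by the condition $\alpha\upharpoonright J=0^J$ on initial segments, which neatly sidesteps the two-dyadic-representations issue that forces the paper to adjoin the extra cylinder sets in its set $G$; your extra remarks on the general case $A\subseteq[0,\infty)$ and on the ``otherwise'' clause are correct elaborations of points the paper leaves implicit.
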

\begin{proof}
We check that the construction in 
Theorem~\ref{thm:distance} in fact produces zero-dimensional spaces. For the
countable case, this is immediate since ultrametric spaces are
zero-dimensional. For
the uncountable case, every point other than the branch corresponding to 0
has a clopen basis, since the local topology there is the subspace
topology of $\c \times \n$. Around the point 0, if we fix some sufficiently 
large $n_0$ and consider the set
\begin{align*}
G & = \{  (\alpha,\beta) : \alpha < 2^{-n_0} \} \ \cup\ \{ (\alpha,\beta): 
\alpha \upharpoonright n_0 = 0^{n_0-1}\smallfrown 1 \} \ \cup \\
& \qquad  \{ (\alpha,\beta): \alpha \upharpoonright (n_0+1) = 0^{n_0}
\smallfrown 1 \} \\
& = \{  (\alpha,\beta) : \alpha \leq 2^{-n_0} \} \ \cup\ \{ (\alpha,\beta): 
\alpha \upharpoonright n_0 = 0^{n_0-1}\smallfrown 1 \} \ \cup \\
& \qquad \{ (\alpha,\beta): \alpha \upharpoonright (n_0+1) = 0^{n_0}
\smallfrown 1 \},
\end{align*}
then $G$ is clopen since the first expression is open and the second is 
closed (we need both of the last two sets in each expression to account for
both the eventually 0 and eventually 1 representation of $2^{-n_0}$), and $0
\in G \subseteq B_{2^{-(n_0-1)}}(0)$ so that there is a clopen basis at 0
as well.
\end{proof}

\section{Spaces with compact and $K_{\sigma}$ distance sets}
\label{sec:compact}

The proof of Theorem~\ref{thm:distance} allow us to characterize the
Polish metric spaces with compact distance sets:

\begin{prop}
\label{prop:compact}
A set $A \subseteq [0,\infty)$ is the set of distances of some
compact Polish metric space if and only if either $A$ is finite or $A$ is 
compact and 0 is a limit point of $A$.
\end{prop}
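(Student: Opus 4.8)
The plan is to extract the result from the proof of Theorem~\ref{thm:distance}: when $A$ is compact the tree used there can be taken finitely branching, and this forces the constructed space to be compact. For necessity, if $(X,d)$ is a compact Polish metric space then $X \times X$ is compact and $d$ is continuous, so $\Dist(X,d) = d[X \times X]$ is a compact set of non-negative reals containing $0 = d(x,x)$. If $\Dist(X,d)$ is moreover infinite then $X$ is infinite, hence not discrete as a compact metric space, so some point $p$ has distinct points arbitrarily close to it; then $\Dist(X,d)$ meets $(0,\epsilon)$ for every $\epsilon > 0$, i.e.\ $0$ is a limit point of it. This gives the forward implication, and for the reverse implication the case of $A$ finite is immediate from Lemma~\ref{lem:countable}, which yields a finite---hence compact---space with distance set $A$.

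So suppose $A$ is compact with $0$ a limit point. Rescaling the metric by a fixed positive constant, we may assume $A \subseteq [0,1)$; this is legitimate since $A$ is bounded, and avoids the disjoint-union argument used for the general case of Theorem~\ref{thm:distance}. Because $A$ is closed and $\pi$ is continuous, $\pi^{-1}[A]$ is a closed, hence compact, subset of the Cantor space $\c$, and $\pi^{-1}(0) = \{0^{\infty}\}$, so the branch of $\pi^{-1}[A]$ projecting to $0$ is automatically unique. We run the construction from the proof of Theorem~\ref{thm:distance} with $X = \pi^{-1}[A]$ and $d$ given by the same three-clause formula, with $T^{*}$ now simply the tree of $\pi^{-1}[A]$ and the Baire-space coordinate suppressed (it plays no role once $A$ is already closed). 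That proof already supplies that $d$ is a complete metric on $X$ with $\Dist(X,d) = A$; the only additional thing to check is that $(X,d)$ is compact.

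For this it is enough to show that $d \colon X \times X \to \bbR$ is continuous for the subspace topology $\tau$ that $X$ inherits from $\c$: then every $d$-ball is $\tau$-open, so the $d$-topology is contained in $\tau$, and since $\tau$ is compact Hausdorff and the $d$-topology is Hausdorff, the identity is a homeomorphism and $(X,d)$ is compact. To see the continuity, take $\alpha_n \to \alpha$ and $\gamma_n \to \gamma$ in $X$. If $\alpha \neq \gamma$ then for large $n$ the longest common prefix of $\alpha_n,\gamma_n$ equals that of $\alpha,\gamma$, and $d(\alpha_n,\gamma_n) \to d(\alpha,\gamma)$ follows from the three clauses together with the continuity of $\pi$. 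If $\alpha = \gamma$, let $s_n$ be the longest common prefix of $\alpha_n$ and $\gamma_n$; then $|s_n| \to \infty$ and $s_n$ agrees with $\alpha$ on an initial segment of length tending to infinity. If $\alpha = 0^{\infty}$ then $\pi(\alpha_n),\pi(\gamma_n) \to 0$, and using $d_{s_n} = \pi(\alpha_{s_n}) \le \pi(\alpha_n) + 2^{-|s_n|}$ each clause gives $d(\alpha_n,\gamma_n) \to 0$. If $\alpha \neq 0^{\infty}$ then, since $|\pi(\mu) - \pi(\nu)| \le 2^{-m}$ whenever $\mu,\nu$ share a prefix of length $m$, we get $d_{s_n} \to \pi(\alpha) > 0$; so for large $n$ any fixed $\epsilon_i < \pi(\alpha)$ lies in $[2^{-|s_n|}, d_{s_n}]$, the second clause applies, and the value returned is the least $\epsilon_i \ge 2^{-|s_n|}$, which tends to $0$ because the $\epsilon_i$ decrease to $0$ and $|s_n| \to \infty$. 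Undoing the rescaling produces a compact Polish metric space with distance set the original $A$.

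The only genuinely new work is this continuity check, and the delicate point inside it is the behavior near the zero branch $0^{\infty}$, where the definition of $d$ switches between clauses; the rest is either taken over verbatim from the proof of Theorem~\ref{thm:distance} or is a standard point-set argument.
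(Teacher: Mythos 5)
Your proposal is correct, and it follows the paper's overall strategy (reuse the construction of Theorem~\ref{thm:distance} on the compact set $\pi^{-1}[A]$ with the Baire coordinate trivialized, handle the finite case via Lemma~\ref{lem:countable}, and get necessity from continuity of $d$ on $X^2$), but it diverges at the one genuinely new step, the verification that the resulting space is compact. The paper proves total boundedness directly: for each $\epsilon>0$ it exhibits a finite $\epsilon$-net $D_\epsilon=\{(\alpha_s,0^\infty): s\in T,\ |s|\le n_0\}$, where $n_0$ is chosen so that some $\epsilon_i$ lies in $(2^{-n_0},\epsilon)$, and then combines this with the completeness already established in the main theorem. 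You instead show that $d$ is continuous for the (compact) subspace topology $\tau$ inherited from $\mathcal{C}$, so that the $d$-topology is coarser than $\tau$ and the identity is a continuous bijection from a compact space to a Hausdorff one, hence a homeomorphism. Your case analysis of the three clauses near and away from the zero branch is the delicate part and it checks out: in particular the observation that for $\alpha_n,\gamma_n\to\alpha\neq 0^\infty$ the second clause eventually applies and returns the least $\epsilon_i\ge 2^{-|s_n|}$, which tends to $0$, is exactly right. Your route costs a slightly longer computation but buys more: it identifies the $d$-topology with the Cantor subspace topology outright, so compactness, completeness, and separability all come for free without invoking the completeness argument from Theorem~\ref{thm:distance}; the paper's $\epsilon$-net argument is shorter but leans on that earlier completeness proof. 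Your added justification in the necessity direction (an infinite compact metric space has a non-isolated point, so $0$ is a limit point of the distance set) fills in a step the paper compresses into ``compact discrete spaces are finite.''
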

\begin{proof}
The conditions are necessary since the distance set is the continuous image
of the compact set $X^2$ and compact discrete spaces are finite.
The case of $A$ finite is handled as in Lemma~\ref{lem:countable},
including only finitely many points, so let $A$ be compact with 0 as a
limit point. By scaling we may assume $A \subseteq [0,1)$, so 
following the proof of the main theorem, $\pi^{-1}[A]$ will be a compact
subset of $2^{\bbN}$. We may take a tree $T^{\ast}$ on $2$ such that
$[T^{\ast}] = \pi^{-1}[A]$ and let $T= \{ (s,0^{|s|}) : s \in T^{\ast}\}$
(so $[T] = [T^{\ast}] \times \{0^{\infty}\}$). 
We construct $(X,d)$ as before so that $\Dist(X,d) = A$.

We check that $(X,d)$ so constructed is totally bounded. Fix 
$\epsilon >0$, and let $n_0$ be such that there is some $\epsilon_i$ with 
$2^{-n_0} < \epsilon_i < \epsilon$.
Set:
\[ D_{\epsilon} = \{ (\alpha_s,0^{\infty}) : s \in T \text{ and } |s| \leq
n_0 \} .\]
Then $D_{\epsilon}$ is finite, and every point in $X$
is within distance $\epsilon$ of some point in $D_{\epsilon}$. This is
clear from our metric since any point must agree with one of these
branches on its first $n_0$ coordinates; if these are not all 0, 
we have an $\epsilon_i$ with $2^{-n_0} < \epsilon_i < \epsilon$, and if
they are all 0 then the distance to $(\alpha_{0^{n_0}},0^{\infty})$ is at most $2^{-n_0}$.
\end{proof}

Since the map $\pi$ is at most two-to-one, we have that the space $X$ will
be countable when $A$ is countable, and hence we also have:

\begin{prop} A set $A \subseteq [0,\infty)$ is the distance set of some 
countable compact Polish metric space if and only if $A$ is a countable
compact set which is either finite or has 0 as a limit point.
\label{thm:cntblecompact}
\end{prop}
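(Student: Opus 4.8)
The plan is to deduce both directions from results already in hand: the necessity half of Proposition~\ref{prop:compact} together with the cardinality remark immediately preceding the statement. Almost no new work is required, so the proof will be short.

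For necessity, suppose $(X,d)$ is a countable compact Polish metric space. Then $X^2$ is countable and compact, so its continuous image $A = \Dist(X,d)$ is a countable compact subset of $[0,\infty)$, and it contains $0$ by our standing convention. If $A$ is infinite then $X$ cannot be finite, since a finite metric space has a finite distance set; an infinite compact metric space has a limit point, hence contains a sequence of distinct points converging to some $x$, and the corresponding distances to $x$ form a sequence of positive elements of $A$ tending to $0$. Thus $0$ is a limit point of $A$.

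For sufficiency, I would split into cases. If $A$ is finite, apply Lemma~\ref{lem:countable} using only the finitely many required points; the space produced is finite, hence countable and compact. If $A$ is countably infinite, compact, and has $0$ as a limit point, scale so that $A \subseteq [0,1)$ and carry out the construction from the proof of Proposition~\ref{prop:compact} without change. That construction produces a compact Polish metric space $(X,d)$ with $\Dist(X,d) = A$, in which $X = [T]$ and $[T] = [T^{\ast}] \times \{0^{\infty}\}$ is in bijection with $[T^{\ast}] = \pi^{-1}[A]$. Since $\pi$ is at most two-to-one, $\pi^{-1}[A]$ is countable, and hence so is $X$.

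There is no genuine obstacle here; the only step that deserves explicit mention is confirming that the compact construction of Proposition~\ref{prop:compact} has a countable underlying set, which is precisely the observation that $\pi$ is at most two-to-one applied to a countable $A$.
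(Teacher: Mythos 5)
Your proposal is correct and follows the paper's own route: the paper obtains this proposition as an immediate consequence of the construction in Proposition~\ref{prop:compact}, observing exactly as you do that $\pi$ is at most two-to-one, so the underlying set $[T^{\ast}] \times \{0^{\infty}\} = \pi^{-1}[A] \times \{0^{\infty}\}$ is countable when $A$ is. Your explicit write-up of the necessity direction (infinite compact spaces force distances accumulating at $0$) is a detail the paper leaves implicit, but it is the intended argument.
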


Finally, we can characterize the spaces with $K_{\sigma}$ distance sets:

\begin{prop} The following are equivalent:
\begin{enumerate}
\item $A \subseteq [0,\infty)$ is the distance set of a
locally compact Polish metric space.
\item $A$ is the distance set of a $\sigma$-compact Polish metric space.
\item $A$ is either countable or $A$ is $K_{\sigma}$ with 0 as a limit
point.
\end{enumerate}
\end{prop}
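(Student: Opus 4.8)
The plan is to prove the cycle of implications (1) $\Rightarrow$ (3), (2) $\Rightarrow$ (3), and (3) $\Rightarrow$ (1) \& (2), so that all three conditions become equivalent. The implications (1) $\Rightarrow$ (3) and (2) $\Rightarrow$ (3) are the necessity half, and here I would argue as follows. A $\sigma$-compact Polish space $X$ has $X^2$ also $\sigma$-compact, so $\Dist(X,d)$ is a continuous image of a $\sigma$-compact set and hence $K_\sigma$; combined with the observation already recorded before Theorem~\ref{thm:distance} (that an uncountable distance set forces $0$ to be a limit point, since $X$ must then be uncountable and the space is separable), this gives (3). For (1) $\Rightarrow$ (2) one uses the standard fact that a locally compact Polish space is $\sigma$-compact (cover by relatively compact open sets and use Lindel\"of), so (1) $\Rightarrow$ (2) $\Rightarrow$ (3) covers both necessity directions at once.

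For the substantive direction (3) $\Rightarrow$ (1), suppose $A$ is $K_\sigma$ with $0$ a limit point (the countable case is handled by Lemma~\ref{lem:countable}, whose spaces are discrete, hence locally compact). Write $A = \bigcup_{n} A_n$ with each $A_n$ compact. By replacing $A_n$ with $A_n \cup \{a \in A : a \le 2^{-n}\} \cup \{0\}$ — still compact, since the tail $\{a \in A: a \le 2^{-n}\}$ is a closed subset of the compact set $A \cap [0, 2^{-n+1}]$... wait, $A$ need not be bounded, so instead intersect with $[0,1]$ first or simply note $\{a\in A: a\le 2^{-n}\}\cup\{0\}$ is the intersection of the $K_\sigma$ set $A$ with the compact set $[0,2^{-n}]$, hence $K_\sigma$; to keep it compact, absorb it into finitely many of the $A_m$'s or pass to a further decomposition. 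The clean route: arrange $A = \bigcup_n B_n$ where each $B_n$ is compact, $B_n \subseteq B_{n+1}$, $0 \in B_n$, and $0$ is a limit point of each $B_n$ (achievable by unioning in the compact tails $(A \cap [0,2^{-n}]) \cup \{0\}$, which is compact because $0$ is a limit point of $A$ so this tail is closed and bounded). Then apply Proposition~\ref{prop:compact} to each $B_n$ to get compact Polish metric spaces $(Y_n, e_n)$ with $\Dist(Y_n,e_n) = B_n$, and glue them using the disjoint-union-with-bridging-distances construction from the end of the proof of Theorem~\ref{thm:distance}: take $X = \bigsqcup_n Y_n$, keep $e_n$ on each piece, and set the cross-distance between $Y_n$ and $Y_m$ ($n<m$) to be some $\delta_m \in B_m$ chosen increasing with $\delta_m \ge \tfrac12 \sup B_m$ (here $\sup B_m$ is attained since $B_m$ is compact). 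The triangle inequality and completeness follow exactly as there, no new distances are introduced, so $\Dist(X,d) = \bigcup_n B_n = A$.

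It remains to check that the glued space $X$ is locally compact. Each $Y_n$ is compact, and in the disjoint union each $Y_n$ is clopen provided the bridging distances $\delta_m$ stay bounded away from the diameters of the individual pieces in the right way — more carefully, a point $y \in Y_n$ has the compact set $Y_n$ as a neighborhood as long as some ball around $y$ inside $X$ is contained in $Y_n$, which holds when $\delta_{n+1}$ (the smallest cross-distance from $Y_n$ to anything outside) exceeds $\operatorname{diam}(Y_n)$; this can be arranged by the standard trick of rescaling $e_n$ so that $\operatorname{diam}(Y_n) < \delta_{n+1}$, or by noting that since $B_n \subseteq B_m$ for $n < m$ we may simply insist $\delta_m > \sup B_{m-1} \ge \operatorname{diam}(Y_{m-1})$. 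So each point of each $Y_n$ has a compact neighborhood, and $X$ is locally compact, completing (3) $\Rightarrow$ (1).

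I expect the main obstacle to be bookkeeping rather than a genuine difficulty: one must simultaneously arrange (a) each $B_n$ compact with $0$ a limit point so Proposition~\ref{prop:compact} applies, (b) the bridging distances $\delta_m$ lie in $A$ and are large enough ($\ge \tfrac12 \sup B_m$) to preserve the triangle inequality as in Theorem~\ref{thm:distance}, and (c) each $\operatorname{diam}(Y_n)$ is small relative to the next bridging distance so that local compactness survives the gluing. All three are compatible because $A$ is unbounded-friendly here (if $\sup A = \infty$ we stretch; if $\sup A < \infty$ the $\delta_m$'s are eventually near $\sup A$ and the pieces are uniformly bounded), and the nesting $B_n \subseteq B_{n+1}$ makes the constraints monotone, so a single inductive choice of the $B_n$'s, the rescalings, and the $\delta_m$'s handles everything at once.
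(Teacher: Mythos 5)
Your overall route is the same as the paper's, which disposes of this proposition in one sentence per implication: (1) $\Rightarrow$ (2) by local compactness plus Lindel\"of, (2) $\Rightarrow$ (3) by continuity of $d$ on the $\sigma$-compact $X^2$ together with the separability observation, and (3) $\Rightarrow$ (1) by decomposing $A$ into compact pieces, applying Proposition~\ref{prop:compact} to each, and gluing with the disjoint-union construction from the end of Theorem~\ref{thm:distance}. So the structure is right, but two of your detail-level claims are false and need repair. First, $(A \cap [0,2^{-n}]) \cup \{0\}$ is \emph{not} compact in general: $A$ is only $K_\sigma$, so this tail need not be closed (take $A = \{0\} \cup (\bbQ \cap (0,1])$, which is $K_\sigma$ with $0$ a limit point). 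Your earlier instinct in the same sentence was the correct one; the clean fix is to choose a single sequence $a_k \in A$ with $a_k \downarrow 0$ and set $B_n = A_1 \cup \dots \cup A_n \cup \{0\} \cup \{a_k : k \in \bbN\}$, a finite union of compact sets that is compact and has $0$ as a limit point.

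Second, your treatment of local compactness both overcomplicates and then mis-fixes the issue. No condition relating $\operatorname{diam}(Y_n)$ to $\delta_{n+1}$ is needed: every cross-distance out of $Y_n$ equals some $\delta_m$ with $m \geq n$, hence is at least $\delta_n \geq \tfrac12 \sup B_1 > 0$, so the open ball of radius $\delta_n$ about any $y \in Y_n$ already lies inside $Y_n$. Thus each $Y_n$ is clopen and compact and the glued space is automatically locally compact (and, for the same reason, complete: a Cauchy sequence is eventually trapped in one piece). This is fortunate, because both of your proposed fixes fail: rescaling $e_n$ changes $\Dist(Y_n,e_n)$ from $B_n$ to a dilate of $B_n$ and destroys the conclusion $\Dist(X,d)=A$; and insisting $\delta_m > \sup B_{m-1}$ is impossible whenever $A$ is bounded, since then $\sup B_m = \sup B_{m-1}$ eventually while $\delta_m \in B_m$. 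With the $B_n$'s corrected as above and the local-compactness claim argued via clopenness of the pieces, your proof is complete and matches the paper's intended argument.
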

\begin{proof}
(1) $\Rightarrow$ (2) follows from the fact that any locally compact
Polish space is $\sigma$-compact,
and (2) $\Rightarrow$ (3) is straightforward.
The case of (3) $\Rightarrow$ (1) follows from
Proposition~\ref{prop:compact} in the same way that the general case of
Theorem~\ref{thm:distance} follows from the case where $A$ is bounded.
\end{proof}

\section{Larger point configurations}
\label{sec:spec}

As shown in \cite{dist2}, the distance set of a Polish metric space is very
far from being a complete invariant for isometry. We can try to improve on
this by considering configurations of a larger number of points.
\begin{dfn*} For $(X,d)$ a Polish metric space and $n \geq 2$, let 
the {\it $n$-point spectrum}, $\text{Spec}_n(X,d)$, be the set:
\[ \{ \langle d_{i,j} \rangle_{i<j<n} : (\exists
x_0,\ldots,x_{n-1}\in X) (\forall i<j<n)\ [ d_{i,j} = d(x_i,x_j) ] \}. \]
\end{dfn*}
Then $\text{Spec}_2(X,d) = \Dist(X,d)$. In general, 
$\text{Spec}_n(X,d)$ is an analytic subset of $\bbR^{\frac{n(n-1)}{2}}$
and completely determines $\text{Spec}_m(X,d)$ for $m < n$.
We can then ask:

\begin{question*} For $n \geq 3$, what sets can be $\text{Spec}_n(X,d)$ for
some Polish metric space $(X,d)$?
\end{question*}

The characterization of the possible $n$-point spectra does not
seem as simple as in the case $n=2$. Consider the case
of $n=3$. Each element of $\text{Spec}_3(X,d)$ must be a triple satisfying the
triangle inequality, but after restricting to the closed subset of such
metric triples the issue seems to be primarily combinatorial.  
For instance, fixing a finite set $\mathcal{T}=\{T_1,\ldots,T_n\}$ of 
metric triples, the question of whether $\mathcal{T}$ is
$\text{Spec}_3(X,d)$ for some $k$-element space $(X,d)$ is equivalent to
the existence of a coloring of the the complete graph
$\mathcal{K}_k$ on $k$ vertices (using as colors the distances in elements
of $\mathcal{T}$) such that the induced colorings on sub-triangles are
precisely $\mathcal{T}$.

Although Polish metric spaces are not generally characterized up to isometry 
by their distances sets, or even by the sequence $\langle \text{Spec}_n(X,d)
: n \in \bbN\rangle$, there are two cases in which this is true.  One
is the case of compact metric spaces.
Two compact metric spaces are isometric if and only if they have the same
$n$-point spectra for all $n\geq 2$ (Theorem $3.27\frac{1}{2}$ of
\cite{gromov}).  Here the spectra are compact subsets of
$\bbR^{\frac{n(n-1)}{2}}$ so this sequence of compact sets is
a complete invariant for isometry and shows that the isometry
relation on compact metric spaces is concretely classifiable.

A second case is that of {\it
ultra-homo\-geneous} spaces, those in which any isometry between finite
subsets of the space extends to an isometry of the whole space. Here again
two ultra-homogeneous Polish metric spaces with equal spectra for all $n$
are isometric. These spectra are no longer compact, and so do not provide a 
concrete classification up to isometry.
It would be interesting to know the possible spectra in this case as an
indication of the complexity of the isometry relation on ultra-homogeneous
Polish metric spaces.

\end{document}